\theoremstyle{definition}
\newtheorem{theorem}{Theorem}
\newtheorem{prop}[theorem]{Proposition}
\newtheorem{lemma}[theorem]{Lemma}
\newtheorem{defn}[theorem]{Definition}
\theoremstyle{remark}
\DeclareRobustCommand{\bb}[1]{\mathbb{#1}}
\DeclareRobustCommand{\t}[1]{\text{#1}}
\DeclareRobustCommand{\set}[1]{\left\{#1 \right\}}
\DeclareRobustCommand{\Set}[2][]{\left\{#1 \, :\, #2 \right\}}
\DeclareRobustCommand{\a}{\alpha}
\DeclareRobustCommand{\card}[1]{\mathrm{card}\left(#1\right)}
\newcommand\restr[2]{{
  \left.\kern-\nulldelimiterspace 
  #1 
  \vphantom{\rule{0pt}{9pt}} 
  \right|_{#2} 
  }}
\DeclareRobustCommand{\rev}{\mathrm{Rev}\,}
\DeclareRobustCommand{\Inv}{\mathrm{Inv}\,}
\begin{document}

\title{Higher Bruhat Orders in Type B}\author{Seth Shelley-Abrahamson and Suhas Vijaykumar}\date{May 12, 2015}\begin{abstract} Motivated by the geometry of certain hyperplane arrangements, Manin and Schechtman  \cite{MS} defined for each integer $n \geq 1$ a hierarchy of finite partially ordered sets $B(n, k),$ indexed by positive integers $k$, called the higher Bruhat orders.  The poset $B(n, 1)$ is naturally identified with the weak left Bruhat order on the symmetric group $S_n$, each $B(n, k)$ has a unique maximal and a unique minimal element, and the poset $B(n, k + 1)$ can be constructed from the set of maximal chains in $B(n, k)$.  Elias \cite{E2} has demonstrated a striking connection between the posets $B(n, k)$ for $k = 2$ and the diagrammatics of Bott-Samelson bimodules in type A, providing significant motivation for the development of an analogous theory of higher Bruhat orders in other Cartan-Killing types, particularly for $k = 2$.  In this paper we present a partial generalization to type B, complete up to $k = 2$, prove a direct analogue of the main theorem of Manin and Schechtman, and relate our construction to the weak Bruhat order and reduced expression graph for Weyl group $B_n$.\end{abstract}\maketitle

\tableofcontents

\section{Higher Bruhat orders in type $A$ (cf. \cite{MS})}
In this section, we recall the construction of and main theorem for the original Manin-Schechtman higher Bruhat orders.

Let $I_n := \set{1, \ldots, n}$ be totally ordered in the usual way, and let $C(I_n,k)$ denote the set of $k$-element subsets of $I_n$. Then $C(I_n, k)$ is totally ordered by the lexicographic ordering, denoted $\rho_{min}$.  We refer to the reverse total ordering as the anti-lexicographic ordering, denoted $\rho_{max}.$

For each $K \in C(I_n, k+1)$, let $P(K) := \Set[S \in C(I_n,k)]{S \subset K}$ be the set of all $k$-element subsets of $K$. We refer to $P(K)$ as the \emph{packet of $K$} and to any subset of $C(I_n,k)$ of the form $P(K)$ for some $K \in C(I_n, k + 1)$ as a \emph{$k$-packet}. Call a total ordering of $C(I_n,k)$ \emph{admissible} if its restriction to each $k$-packet is either lexicographic or anti-lexicographic. Let $A(I_n,k)$ denote the set of admissible total orderings of $C(I_n,k)$. Clearly, if $\rho$ is an admissible ordering so is its reverse ordering $\rho^t$, and both the lexicographic ordering $\rho_{min}$ the anti-lexicographic ordering $\rho_{max} = \rho_{min}^t$ are admissible.  Note that any total ordering of $C(I_n, 1) = I_n$ is admissible, as the only total orderings on a 1-packet are the lexicographic and anti-lexicographic orderings, so the admissibility criterion is vacuous for $k = 1$ and $A(I_n, 1)$ is the set of total orderings of $I_n$.

For a set $S$, let $2^S$ denote the set of subsets of $S$, and for a total ordering $\rho$ on $S$ and a subset $T \subset S$ let $\rho|_T$ denote the restriction of $\rho$ to $T$.  Let the function $\Inv : A(I_n, k) \rightarrow 2^{C(I_n, k + 1)}$ be defined by $$\Inv(\rho) := \{K \in C(I_n, k + 1) : \rho|_{P(K)} = \rho_{max}|_{P(K)}\}.$$  For example, $\Inv(\rho_{min}) = \emptyset$ and $\Inv(\rho_{max}) = C(I_n, k + 1)$.  For $k = 1$ this gives the usual notion of the inversion set of a permutation of $I_n$.

Let also the function $N : A(I_n, k) \rightarrow 2^{C(I_n, k+ 1)}$ be defined by $$N(\rho) := \{K \in C(I_n, k + 1) : P(K) \text{ forms a chain in } \rho\}.$$  For $\rho \in A(I_n, k)$ and $K \in N(\rho)$, the ordering $p_K(\rho)$ obtained by reversing the ordering of the chain $P(K)$ in $\rho$ is also admissible, because any two $k$-packets have intersection of size at most 1.  This operation $p_K$, when defined, is called a \emph{packet flip}.  In particular, we see \[\Inv(\rho') = \begin{cases} \Inv(\rho) \setminus K & \t{ if }K \in \Inv(\rho) \\\Inv(\rho) \cup \set{K} & \t{ otherwise.} \end{cases}\]

We will now construct the set $B(I_n,k)$ as a quotient of $A(I_n, k)$ by a certain equivalence relation. Call $\rho, \rho' \in A(I_n, k)$ \emph{elementarily equivalent} if one can be obtained from the other by reversing the order of two neighboring elements that do not belong to a common $k$-packet. Let $\sim$ be the equivalence relation on $A(I_n, k)$ generated by these elementary equivalences, so that $\rho \sim \rho'$ if and only if $\rho$ and $\rho'$ can be connected by a sequence of elementary equivalences.  Let $$B(I_n, k) := A(I_n, k)/\sim$$ be the quotient of $A(I_n, k)$ by this equivalence relation.  For $\rho \in A(I_n, k)$ let $[\rho]$ denote its class in $B(I_n, k)$.  We set $r_{min} = [\rho_{min}]$ and $r_{max} = [\rho_{max}].$

It is clear that if $\rho \sim \rho'$ then $\Inv(\rho) = \Inv(\rho')$ and hence $\Inv$ descends to $B(I_n, k)$.  We also extend the definition of $N$, defining $N(r)$ for $r \in B(I_n, k)$ by $$N(r) = \cup_{\rho \in r} N(\rho).$$  For $K \in N(r)$ there exists $\rho \in r$ with $K \in N(\rho)$, so that $p_K(\rho)$ is defined.  It is clear then that $[p_K(\rho)]$ is independent of the choice of $\rho$, and we extend the definition of packet flips be defining $p_K(r) = [p_K(\rho)]$ for any $\rho \in r$ and $K \in N(\rho)$.

We now define the Manin-Schechtman higher Bruhat orders on the sets $B(I_n, k)$.  For $r, r' \in B(I_n, k)$, we write $r <_{MS} r'$ if there exist sequences $K_1,...,K_m \in C(I_n,k+1)$ and $r_0, ..., r_m \in B(I_n, k)$ such that $r = r_0$, $r' = r_m$, $K_i \in N(r_{i - 1}) \setminus \Inv(r_{i - 1})$, and $r_i = p_{K_i}(r_{i - 1})$ for $1 \leq i \leq m$. The following theorem was proven by Manin and Schechtman about the relation $<_{MS}$:

\citethm[cf. \cite{MS}] The following hold:$ $
\begin{itemize} 
\item $<_{MS}$ defines a partial order on $B(I_n,k)$.
\item Under $<_{MS}$, $B(I_n,k)$ is a ranked poset with a unique minimal element, $r_{min}$, and a unique maximal element, $r_{max}$. The rank is given by $r \mapsto |\Inv(r)|$.
\item The map $r_{min} < p_{K_1}(r_{min}) < \cdots < p_{K_m}\cdots p_{K_1}(r_{min}) \mapsto \rho: K_1 \prec \cdots \prec K_m$ defines a bijection from the set of maximal chains in $B(I_n,k)$ to the set $A(I_n,k+1)$.
\item The map $\Inv: B(I_n,k) \to 2^{C(I_n,k+1)}$ is injective.
\end{itemize}
\endcitethm
This is the central result that we wish to generalize to type $B$.  First, we make explicit the connection of this construction for $k = 1$ and $k = 2$ with the type $A$ Weyl groups $S_n$.

\section{Connection with Type A Weyl Groups}
As we saw before, for $k = 1$ admissibility is a vacuous condition on orders of $I$, so $A(I_n, 1)$ is the set of total orderings of $I_n$.  Furthermore any two distinct elements $i \neq j \in I_n$ belong to the common packet $\{i,j\}$ so there are no elementary equivalences between orderings, and $B(I_n, 1) = A(I_n, 1)$.  We may then identify $B(I_n, 1)$ with the symmetric group $S_n$, where the total ordering $a_1 < \cdots < a_n$ corresponds to the permutation $a_i \mapsto i$.

Let $$\Phi := \{\pm(e_i - e_j): 1 \leq i < j \leq n\} \subset \mathbb{R}^n$$ be the root system attached to $S_n$, and choose the system of positive roots $$\Phi^+ := \{e_i - e_j : 1 \leq i < j \leq n\}$$ with associated simple roots $\Pi = \{e_i - e_{i + 1} : 1 \leq i < n\}$ and simple reflections $s_i = (i, i + 1)$ for $1 \leq i < n$.  There is then an obvious bijection between $C(I_n, 2)$ and $\Phi^+$ given by $$\{i, j\} \mapsto e_i - e_j$$ for $i < j$.  We have a function $\Inv : S_n \rightarrow 2^{\Phi^+}$ given as usual by $$\Inv(w) := \{\alpha \in \Phi^+ : w(\alpha) \notin \Phi^+\}$$ so the length function $l : S_n \rightarrow \mathbb{Z}^{\geq 0}$ is $l(w) = |\Inv(w)|$.  This is compatible with $\Inv : B(I_n, 1) \rightarrow 2^{C(I_n, 2)}$ in the sense that the following diagram commutes: 

\begin{center}
\begin{tikzpicture}
  \matrix (m) [matrix of math nodes,row sep=3em,column sep=4em,minimum width=2em]
  {
     B(I_n, 1) & S_n \\
     2^{C(I_n, 2)} & 2^{\Phi^+} \\};
  \path[-stealth]
    (m-1-1) edge node [left] {$\Inv$} (m-2-1)
            edge [double] node [above] {$\sim$} (m-1-2)
    (m-2-1.east|-m-2-2) edge node [below] {}
            node [above] {$\sim$} (m-2-2)
    (m-1-2) edge node [right] {$\Inv$} (m-2-2);
\end{tikzpicture}
\end{center}

For an ordering $\rho = (a_1 < \cdots < a_n)$ of $I_n$ corresponding to permutation $w \in S_n$, the action of the packet flip $p_{\{a_i, a_{i + 1}\}}$ for $i < n$ on $\rho$ corresponds to left multiplication by the adjacent transposition $s_i := (i, i + 1) \in S_n$.  One then sees immediately that the ordering $<_{MS}$ on $B(I_n, 1)$ is identified with the weak left Bruhat order on $S_n$, defined by the covering relations $w < w'$ for $l(w') = l(w) + 1$ and $w' = s_iw$ for some $i$.

The unique minimal element $r_{min}$ of $B(I_n, 1)$ is identified with $1 \in S_n$ and the unique maximal element $r_{max}$ is identified with the longest element $w_0 \in S_n$ given by $w_0(i) = n + 1 - i$.  The identification of $A(I_n, 2)$ with the set of maximal chains in $B(I_n, 1)$ therefore identifies $A(I_n, 2)$ with the set of reduced expressions for $w_0$.  Elementary equivalence in $A(I_n, 2)$ then corresponds to exchanging the positions of two adjacent commuting simple reflections $s_i$ and $s_j$ for $|i - j| > 1$, and packet flip operations correspond to $m = 3$ braid relations $s_is_{i + 1}s_i = s_{i + 1}s_is_{i + 1}$.

In the next section, we introduce a partial generalization of the Manin-Schechtman construction to type B.  We provide analogues of the sets $C(I_n, k)$ and packets for all $k$ and $n$, and for $k = 1, 2$ and all $n$ we give a complete analogue, providing partially ordered sets analogous to the $B(I_n, k)$ satisfying the direct analogues of the main theorem of Manin and Schechtman on $<_{MS}$ and the root system combinatorics explained above in type $A$.  In type $B$, $m = 4$ ($stst = tsts$) braid relations become relevant in addition to the $m = 3$ ($sts = tst$) braid relations seen in type $A$, leading to the introduction of two types of packets.

\section{Construction in Type B}

Let $E$ be a finite subset of $\bb Z$ stable under negation and not containing $0$.  Let $E_+$ denote the subset of positive elements in $E$.  We give $E$ the total ordering inherited from $\bb Z$.

\defn Let $C_B(E, 1) := E$. For $k > 1$, let

\begin{align*} & \widetilde{C_B^1(E,k)} := \Set[S \subset E]{\card{S}=k, \t{ and the elements of $S$ have distinct absolute value} },  \\ & C_B^2(E,k) := \Set[T \cup \set{\star}]{T \subset E_+,\, \card{T} = k-1} \end{align*}
where $\star$ is a bookkeeping symbol.  $\bb Z/2\bb Z$ acts on $\widetilde{C_B^1(E, k)}$ by negation, and we let $$C_B^1(E,k) = \widetilde{C_B^1(E, k)}/(\bb Z/2\bb Z).$$  Finally, we set $$C_B(E, k) = C_B^1(E, k) \cup C_B^2(E, k).$$
\enddefn

Let $J_n = \{-n, ..., -1, 1, ..., n\}$ for $n \geq 1$.  The sets $C_B(J_n, k)$ are to play the roles of the sets $C(I_n, k)$ seen previously.  We use the notation $C_B$ to indicate that these are constructions related to type $B$.  As in the type $A$ case, we want to associate to an element $K \in C_B(J_n, k+1)$ a subset, $P_B(K) \subset C_B(J_n, k)$, called its \emph{packet}.  As before, a subset of $C_B(J_n, k)$ of the form $P_B(K)$ will be called a $k$-\emph{packet}.  $P_B(K)$ will be constructed differently depending on whether $K \in C_B^1(J_n, k)$ or $K \in C_B^2(J_n, k)$.

For brevity, we will define $\sigma$ to be the negation map $x \mapsto -x$.

\defn For $K \in C_B^1(J_n,k+1)$, let $R \in \widetilde{C_B^1(J_n, k + 1)}$ be a representative. Define $\widetilde P_B(K)$ to be the set of the $\sigma$-orbits of the $k$-element subsets of $R$. Clearly, $\widetilde P_B(K)$ is independent of the choice of representative $R$.  Note that for $k = 1$, the elements of $C_B(J,k)$ are not themselves $\sigma$-orbits. For this reason, we make the correction \[P_B(K) := \begin{cases} \cup_{T \in \widetilde P_B(K)} T  & \t{if $K \in C_B^1(J_n, 2)$} \\ \widetilde P_B(K) & \t{if $K \in C_B^1(J_n, k + 1)$ for $k > 1$} \end{cases}\] For example, if $K = [\{-3, 2\}]$, then $$P_B(K) = \{-3, -2, 2, 3\} \subset C_B(J_3, 1),$$ and if $K = [\{-3, 2, 1\}]$ then $$P_B(K) = \{[\{-3, 2\}], [\{-3, 1\}], [\{2, 1\}]\} \subset C_B(J_3, 2).$$

For $K \in C_B^2(J_n,k+1)$, let $K' = K \setminus \set{\star}$, and define $P_B(K) := C_B(K' \cup \sigma(K'),k)$.
\enddefn

\nota
As $C_B^1(J_n,k)$ is a set of equivalence classes, we encounter the problem of choosing a good way to represent its elements. We will denote the class of $\{a_1, ..., a_k\}$ by $[a_1, ..., a_k]$.  Wherever possible, we will choose the representative for which the element with the greatest magnitude is negative. For some $T \in C_B^1(J_n,k)$, if $\set{a_1, \ldots, a_k}$ is such a representative, we will denote $T$ by the bracketed list $[a_1, \ldots, a_k]$. {\bf Such a representative will be referred to as an \emph{preferred} representative, and we will indicate where this choice of representative is assumed.}

For consistency, an element $s \in C_B^2(J_n,k)$ will be denoted as a bracketed list $[b_1,\ldots b_{k-1}, \star]$, as well, with either all $b_i$ positive or all $b_i$ negative.

 We list elements with negative elements first, in increasing order, followed by positive elements, in decreasing order, e.g. $[-5,-2,3,1]$.
\endnota

\defn We now introduce certain \emph{standard orders} on the sets $C_B(J_n, k)$ for $k = 1, 2, 3$.

The standard order of $C_B(J_n, 1) = J_n$ is the usual ordering of $J_n$ inherited from $\bb Z$.

The standard order of the set $C_B(J_n,2)$ is defined with respect to preferred representatives. It is convenient here to represent elements $[a_1,\,\star]$ by $[-a_1,\,a_1]$.  Viewing a total ordering as a list of elements read left to right, with the smallest elements occurring first, i.e. to the left, we define the standard order on $C_B(J_n, 2)$ as follows:

\begin{itemize}
\item Elements represented by two negative indices occur first, in lexicographic order. 
\item Elements with a single negative index occur afterwards. If the elements are listed in increasing order, $a_1,\,a_2$, then the ordering is lexicographic in the following sense: \[ [a_1,\, a_2] < [b_1,\,b_2] \t{ if } a_1 < b_1, \t{ or if } a_1 = b_1 \t{ and } a_2 > b_2 \]
\end{itemize}

Similarly, we have the following standard order for the set $C_B(J_n,3)$. Similar to the above, it is convenient here to represent elements of the form $[a_1,\,a_2,\,\star]$ by $[-a_1,\,-a_2,\,a_2]$ with $a_1 > a_2 > 0$.

\begin{itemize}
\item Elements represented by three negative indices occur first, in lexicographic order.
\item Elements represented by two negative indices occur second. For two elements represented by negative indices, define the order by \[[a_1,\,a_2,\,a_3] < [b_1,\,b_2,\,b_3] \t{ if } [a_1,\,a_2] < [b_1,\,b_2],\t{ or if } [a_1,\,a_2] = [b_1,\,b_2] \t{ and }a_3 > b_3\,\] \hfill where $a_3,\,b_3 > 0$.

\smallskip

\item Elements represented by a single negative index occur third. For two elements of this form, we set \[[a_1,\,a_2,\,a_3] < [b_1,\,b_2,\,b_3] \t{ if } a_1 > b_1,\t{ or if } a_1 = b_1 \t{ and }[-a_2,\,-a_3] < [-b_2\,,-b_3] \t.\]
\end{itemize}

The standard ordering of a given set is denoted $\rho_{min}$ whenever this notation is unambiguous. For these sets, the \emph{reverse standard ordering}, $\rho_{max},$ is obtained by reversing the standard ordering.  \enddefn

\defn For a $k$-packet $P$ with $k = 2$ or $k = 3$, the \emph{standard ordering} of $P$ is given by the restriction of the standard ordering $\rho_{min}$ to $P$. For a $1$-packet, standard packet orderings are given by the following Hasse diagrams, where we assume $[j, i]$ is a preferred representative as above:
\begin{center}
\begin{figure}[!htb] \hfill
\minipage{0.2\textwidth}
\endminipage\hfill
\minipage{0.2\textwidth}
  \center{$[j,\,i]$} \\

  \medskip

  \begin{tikzpicture}
  \node (i) at (-1,0) {$i$};
  \node (mi) at (0,-1) {$-i$};
  \node (j) at (-1,-1) {$j$};
  \node (mj) at (0,0) {$-j$};
  \draw[->] (j) -- (i);
  \draw[->] (mi) -- (mj);
\end{tikzpicture}
\endminipage\hfill
\minipage{0.2\textwidth}
  \center{$[i,\,-j]$} \\

  \medskip

      \begin{tikzpicture}
  \node (k) at (-1,0) {$k$};
  \node (mk) at (-1,-1) {$-k$};
  \draw[->] (mk) -- (k);

\end{tikzpicture}
\endminipage\hfill
\minipage{0.2\textwidth}
\endminipage\hfill
\end{figure}
\end{center}

The \emph{reverse ordering} of a 1-, 2-, or 3-packet $T$ is given by reversing the direction of each inequality. By abuse of notation, $T$ will often denote both the set and the ordering relation, and $\rev T$ denotes both the set and the reverse ordering.
\enddefn

\defn A \emph{comparable component} of a poset is defined to be a connected component of the poset's Hasse diagram. \enddefn

\defn  For $k \leq 3$, we now make a collection of definitions analogous to the type A case:

\begin{itemize}
\item  A total ordering $\rho$ of $C_B(J_n,k)$ is \emph{admissible} if, for each $k$-packet $P$, $\rho$ extends either $P$ or $\rev P$.
\item Let $A_B(J_n,k)$ be the set of all admissible orderings of $C_B(J_n,k)$. In particular $\rho_{min}, \rho_{max} \in A_B(J_n, k).$
\item For an admissible ordering $\rho$, let $\Inv(\rho)$ be the set of elements $K \in C_B(J_n,k+1)$ such that $\rho$ extends $\rev P_B(K)$.
\item For $\rho \in A_B(J_n,k)$, let $N(\rho)$ be the set of $K \in C_B(J_n,k+1)$ such that for each comparable component $C \subset P_B(K)$, $C$ forms a chain in $\rho$.
\item Two elements of $C_B(J_n,k)$ \emph{commute} if they are incomparable in each $k$-packet to which they both belong.
\item Two orderings $\rho,\,\rho' \in A_B(J_n,k)$ are \emph{elementarily equivalent} if $\rho'$ can be obtained from $\rho$ by exchanging the order of two adjacent, commuting elements.
\item Let $\sim$ be the equivalence relation on $A_B(J_n,k)$ generated by elementary equivalence.
\item Let $B_B(J_n,k) = A_B(J_n, k)/\sim$. Let $[\rho]$ denote the equivalence class of $\rho \in A_B(J_n, k)$.
\item Let $N([\rho]) = \cup_{\rho' \in [\rho]} N(\rho')$.
\end{itemize}
\enddefn

As before, it is clear that $\rho \sim \rho'$ implies $\Inv(\rho) = \Inv(\rho')$, so $\Inv$ descends to $B_B(J_n,k)$. Moreover, we have the following result, as in type A:

\begin{prop}
The function $\Inv$ is injective on the set $B_B(J_n,k)$.
\end{prop}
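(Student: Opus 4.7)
The plan is to establish the stronger statement that any two $\rho, \rho' \in A_B(J_n, k)$ with $\Inv(\rho) = \Inv(\rho')$ must satisfy $\rho \sim \rho'$; injectivity of $\Inv$ on $B_B(J_n, k) = A_B(J_n, k)/\sim$ then follows at once. The argument will proceed by induction on the disagreement count
\[ d(\rho, \rho') := \card{\Set[\{a,b\} \subset C_B(J_n, k)]{a <_\rho b \t{ and } b <_{\rho'} a}}, \]
with base case $d = 0$ being the tautology $\rho = \rho'$.

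The crux is the observation that whenever $\rho$ and $\rho'$ disagree on a pair $\{a, b\}$, the elements $a$ and $b$ must commute. Indeed, suppose instead that $a$ and $b$ both lie in some common $k$-packet $P = P_B(K)$ and are comparable in it. Admissibility forces $\rho|_P$ (respectively $\rho'|_P$) to extend exactly one of $P$ or $\rev P$, and which is determined by whether $K \in \Inv(\rho)$ (respectively $\Inv(\rho')$). Since these inversion sets coincide, both restrictions extend the same partial order on $P$; as that partial order already compares $a$ and $b$, both $\rho$ and $\rho'$ must order $\{a, b\}$ the same way, contradicting disagreement.

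For the inductive step, assume $d(\rho, \rho') > 0$ and choose any disagreeing pair $a <_\rho b$ with $b <_{\rho'} a$. If these elements are not adjacent in $\rho$, pick any $c$ strictly between them in $\rho$; since $a <_{\rho'} c <_{\rho'} b$ would contradict $b <_{\rho'} a$, at least one of $(a, c)$ or $(c, b)$ is itself a disagreeing pair with strictly fewer intermediate elements in $\rho$. Iterating produces an adjacent disagreeing pair, and by the crux its two elements commute. Swapping them in $\rho$ is an elementary equivalence yielding an admissible $\rho'' \sim \rho$ with $\Inv(\rho'') = \Inv(\rho')$ and $d(\rho'', \rho') = d(\rho, \rho') - 1$; the inductive hypothesis closes the argument.

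The main subtlety I anticipate is verifying the crux uniformly across the two packet types and all $k \leq 3$. For $k = 2$ and $k = 3$ the standard packet orderings are total, so the observation is immediate; for the $k = 1$ packets of type $C_B^2$ the packet has only two elements and the claim is trivial; for the $k = 1$ packets of type $C_B^1$ one inspects the four-element Hasse diagram displayed above and verifies that its nontrivial comparabilities --- which $P$ and $\rev P$ order oppositely --- suffice to pin down the order of every comparable pair. A subordinate routine check is that swapping an adjacent commuting pair preserves admissibility, which reduces to the general fact that exchanging two incomparable adjacent elements of a linear extension of a poset yields another linear extension.
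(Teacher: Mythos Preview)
Your proof is correct and follows essentially the same route as the paper's: both arguments show that two admissible orderings with the same inversion set must agree on every packet-comparable pair (your ``crux''), and hence are related by a sequence of swaps of adjacent commuting elements. The paper packages this last step by invoking the general fact that any two linear extensions of a finite poset are connected by transpositions of adjacent incomparable elements, whereas you prove that fact inline via your disagreement-count induction; the underlying idea is the same.
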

\begin{proof}
Consider two orderings $\rho$ and $\rho'$, such that $\Inv(\rho) = \Inv(\rho') = S$. Note that the transitive closure of the union over the ordering relations $\rev P$ for $P \in S$ and $Q$ for $Q \in C(J_n,k+1) \setminus S$ defines a poset structure on $C_B(J_n,k)$, and both $\rho$ and $\rho'$ must be linear extensions of this poset. Furthermore, if two elements are incomparable in this poset, then they must be incomparable in every packet to which they both belong. But any two linear extensions of a finite poset differ by a sequence of transpositions of adjacent elements incomparable in the poset, so $\rho \sim \rho'$ as needed.\end{proof}

\begin{defn}
Given $\rho$ in $A(J_n,k)$ and $K \in N(\rho)$, we can construct a new admissible order $p_K(\rho),$ the \emph{packet flip} of $\rho$ by $K$, by reversing the order of each comparable component of $P_B(K)$ in $\rho$. Clearly \[\Inv(\rho') = \begin{cases} \Inv(\rho) \setminus K & \t{ if }K \in \Inv(\rho) \\\Inv(\rho) \cup \set{K} & \t{ otherwise.} \end{cases}\]  Like in type A, we may extend this operation to $B_B(J_n, k)$ by acting on representatives.  Specifically, for $r \in B_B(J_n, k)$ and $K \in N(r)$, there exists $\rho \in r$ with $K \in N(\rho)$, and we set $p_K(r) = [p_K(\rho)].$

 For any $[\rho],\,[\rho'] \in B_B(J_n,k)$, we write $[\rho] < [\rho']$ if there exists a finite sequence $\{\rho_i\}_1^m \subset A_B(J_n,k)$ such that $\rho_1 = \rho$, $\rho_m = \rho'$, and for each pair $(\rho_i,\,\rho_{i+1})$ there exists some $K_i \in N([\rho_i]) \setminus \Inv([\rho_i])$ such that for some $\rho'_i \in [\rho_i]$, $\rho_{i+1} = p_{K_i}(\rho'_i)$. This relation defines a partial ordering on the set $B_B(J_n,k)$.
\end{defn}

\theorem \label{t1} For the cases $k = 1,\,2$, $B_B(J_n,k)$ has a unique maximal (respectively, minimal) element, given by $[\rho_{max}]$ (resp. $[\rho_{min}]$).\endtheorem

Our proof will make use of the following lemmas, which will be proved after the general argument is given.

\nota For some admissible ordering $\rho \in A_B(J_n,k)$, and some $S \subset C_B(J_n,k)$, we write $\overline S(\rho)$ to denote the minimal chain containing $S$ in $\rho$. To simplify the notation, we write $S_K$ to denote the set $P_B(K)$.

\defn For $\rho \in A_B(J_n,2)$ we say that $x$ \emph{blocks} $S$ in $\rho$ when $x \notin S$ but $x \in \overline S(\rho')$ for all $\rho' \in [\rho]$. \enddefn

\lemma \label{l1}  Let a set $S \subset C_B(J_n,2)$ be given, and let $\rho \in A_B(J_n, 2)$. If $x$ does not block $S$ in $\rho$ and $x \notin S$, then there exists some $\rho' \in [\rho]$ such that $\overline S(\rho') \subset \overline S(\rho)$, and $x \not\in \overline S(\rho')$.\endlemma

\lemma \label{l2} Let $\rho \in A_B(J_n, 2)$.  For $K \in C_B(J_n, 3)$, $K \not\in N([\rho])$ if and only if there exists some $x$ which blocks $S_K$ in $\rho$.\endlemma

\lemma \label{l3} Let $[\rho] \in B_B(J_n, 2)$, and suppose $K \not\in N([\rho]) \cup \Inv([\rho])$.  Then at least one of the following seven cases holds for all $\rho' \in [\rho]$:

\begin{itemize}
\item If $K \in C_B^1(J_n,3)$, let $[i,\,j,\,k]$ be a preferred representative. Then, we have either:
\smallskip
    \begin{enumerate}[leftmargin=2cm]
    \item $[i,\,j] < [i,\,x] < [i,\,k]$, 
    \item $[i,\,k] < [k,\,x] < [j,\,k]$, or
    \item $[i,\,j] < [j,\,x] < [j,\,k]$,
    \end{enumerate}
    \hfill for some $x \in J_n \setminus \set{i,\,j,\,k}$.

    \smallskip

\item If $K \in C_B^2(J_n,3)$, fix $K = [i,\,j,\,\star]$. Then, we have either:
\smallskip
    \begin{enumerate}[leftmargin=2cm]
    \item $[i,\,j] < [i,\,x] < [i,\,\star]$,
    \item $[i,\,\star] < [i,\,x] < [i,\,-j]$,
    \item $[i,\,-j] < [j,\,x] < [j,\,\star]$, or
    \item $[i,\,j] < [j,\,x] < [i,\,-j]$,
    \end{enumerate}
    \hfill for some $x \in J_n \setminus \set{i,\,j,\,-i,\,-j}$.
\end{itemize} \endlemma

\lemma \label{l4} In the setting of the previous lemma, there exists some $K' \in C_B(J_n, 3) \setminus \Inv([\rho])$ such that either $\overline {S_{K'}}(\rho') \subsetneq \overline S_K(\rho')$ for all $\rho' \in [\rho]$, or the minimal element of $S_{K'}$ is greater than the minimal element in $S_{K}$ in every ordering $\rho' \in [\rho]$.\endlemma

\begin{proof}[Proof of Theorem 9]

Clearly any class of orderings $[\rho]$ satisfying $\Inv([\rho]) = C_B(J_n,k+1)$ must be maximal, so in particular $[\rho_{max}]$ is a maximal element. By injectivity of $\Inv$ on $B_B(J_n,k)$, $[\rho_{max}]$ is the unique class of orderings for which $\Inv([\rho]) = C_B(J_n,k+1)$. Therefore, we need only prove that any $[\rho] \in B_B(J_n, k)$ for which $\Inv([\rho]) \subsetneq C_B(J_n,k+1)$ is not maximal.  Let $[\rho]$ be such an ordering, so we have some $K \in C_B(J_n,k+1) \setminus \Inv([\rho])$.  We need to find $K' \in N([\rho])\setminus \Inv([\rho]).$

For $k = 2$ this follows immediately from the preceding lemma and induction.

For $k = 1$, the statement follows immediately from the identification in the next section (independently of the intervening material) of the poset $B_B(J_n, 1)$ and the Weyl group $B_n$ with the weak left Bruhat order.
\end{proof}

\begin{proof}[Proof of Lemma \ref{l1}]
Let $\rho, S$, and $x$ be as in the statement of the lemma. Then there exists $\hat \rho \in [\rho]$ such that $x \not\in \overline S(\hat\rho)$. By taking reverse orderings if necessary, we may assume $x < \min S$ in $\hat\rho$. Let $T$ be the subset of $\overline S(\rho)$ which is less than or equal to $x$ in the order $\rho$. Clearly $T$ forms a chain in $\rho$.

As $\hat{\rho} \sim \rho$, there exists a sequence $t_1 \ldots t_r$ of pairs of commuting elements of $C_B(J_n, 2)$ such that $\hat{\rho}$ can be obtained from $\rho$ by exchanging the order of the pair $t_1$, then $t_2$, etc., where at each step the pair $t_i$ to be reversed is an adjacent pair.  Let $t_{i_1}, ..., t_{i_s}$, with $1 \leq i_1 < i_2 < \cdots < i_s \leq r$ be the subsequence of pairs of elements in $T$.  Then the ordering $\rho'$ obtained from $\rho$ by first reversing the ordering of $t_{i_1}$, then $t_{i_2}$, etc., is such that $\bar{S}(\rho') \subset \bar{S}(\rho)$ and $x \notin \bar{S}(\rho')$.
\end{proof}

\begin{proof}[Proof of Lemma \ref{l2}]
Fix $S = P_B(K)$. If some $x$ blocks $S$ in $\rho$, then certainly $K \not\in N([\rho])$. Suppose that no element blocks $K$ in $\rho$. If $\overline S(\rho) = S$, then $K \in N([\rho])$. If not, there exists $y \in \overline S(\rho) \setminus S$. By Lemma 11, there exists $\rho' \sim \rho$ such that $y \not\in \overline S(\rho')$, and $\overline S(\rho') \subset \overline S(\rho)$, and the lemma follows by induction on $|\bar{S}(\rho)|$.
\end{proof}

\begin{proof}[Proof of Lemma \ref{l3}]
Let $\rho \in A_B(J_n,2)$ and $K \in C_B(J_n,3)\setminus (N([\rho]) \cup \Inv(\rho))$. Write $K = [i,\,j,\,k]$, where $i <j< 0$, by the conventions introduced earlier. Once again, let $S = P_B(K)$, and $\overline S(\rho)$ is the minimal chain containing $S$ in $\rho$.

First, we show that there exists some element which blocks $S$ in $\rho$, which does not commute with every element of $S$. Suppose to the contrary that every element which blocks $S$ commutes with every element of $S$. Then, by applying Lemma 11, we can produce some $\rho' \in [\rho]$ for which the only elements in $\overline S(\rho') \setminus S$ are those which block $\rho$ in $S$. But then as these elements commute with every element of $S$ there exists an equivalent ordering $\rho^*$ for which $\overline S(\rho^*) = S$, contradicting $K \notin N([\rho])$.

Suppose $K \in C_B^1(J_n,3)$. Then, we can conclude from the above that there exists some $b$ which blocks $S$ in $\rho$, and $b$ has the form $[i,\,x],\,[j,\,x]$, or $[k,\,x]$ for some $x \in J_n \setminus\{i, j, k\}$. Either $b$ falls into one of the stated cases, or one of the following:
\begin{enumerate}
\item $[i,\,j] < [k,\,x]  < [i,\,k]$
\item $[i,\,k] < [i,\,x]  < [j,\,k]$
\end{enumerate}
 In case (1), consider the set $D = \Set[d]{[i,\,j]<d<[k,\,x]}$. If every element of $D$ commutes with $[i,\,j]$, then, as $b = [k,x]$ also commutes with $[i, j]$, there exists an equivalent ordering $\rho' \sim \rho$ for which $\overline{S_K}(\rho')$ does not contain $b$, a contradiction. We may thus conclude that there is an element $b' \in D$ of the form $[i,\,x]$ or $[j,\,x]$ such that $[i,\,j] < b' < [i,\,k]<[j,\,k]$.

 In case (2), consider the set $D = \Set[d]{[i,\,k]<d<[j,\,k]}$. If every element of $D$ commutes with $[j,\,k]$, then there exists an equivalent ordering $\rho' \sim \rho$ for which $\overline{S_K}(\rho')$ does not contain $[i,\,x]$, a contradiction. We may thus conclude that there is an element $b' \in D$ of the form $[k,\,x]$ or $[j,\,x]$ such that $[i,\,j]<[i,\,k] < b' < [j,\,k]$.

 If $K \in C_B^2(J_n,3)$, then as above there exists some $b \in \overline S(\rho)$ of the form $[i,\,x]$ or $[j,\,x]$. Either $b$ belongs to one of the stated cases, or we have $[i,\,-j] < [i,\,x] < [j,\,\star]$. In this case, consider the set $D = \Set[d]{[i,\,-j]<d<[j,\,\star]}$. If every element of $D$ commutes with $[j,\,\star]$, then there exists an equivalent ordering $\rho' \sim \rho$ for which $\overline{S_K}(\rho')$ does not contain $[i,\,x]$, a contradiction. We may thus conclude that there is an element $b'$ of the form $[j,\,x]$ such that $[i,\,-j] < b' < [j,\,\star]$.
\end{proof}

\begin{proof}[Proof of Lemma \ref{l4}]
Lemma \ref{l4} is proved by case work. For the complete case analysis, refer to the Appendix [Sec. \ref{l4p}].
\end{proof}

\theorem \label{t2} For $k = 1,\,2$, there is a bijection \[\{\t{maximal chains in }B_B(J_n,k)\} \xrightarrow{\scalebox{1.2}{$\sim$}} A_B(J_n,k+1)\t,\] defined by \[ [\rho_{min}] = [\rho_1] \leq [\rho_2] \leq \cdots  [\rho_i] \cdots \leq [\rho_{max}] \mapsto K_1 < \cdots < K_m\t,\] where $[\rho_i] \in p_{K_{i-1}}(\dots p_{K_1}([\rho_{min}]))$, $m = |C(J_n,k + 1)|$ and $K_i \in N([\rho_i])\setminus \Inv([\rho_i])$ for all $i$. \endtheorem

\begin{proof} As $[\rho_{min}]$ is the unique minimal element of $B_B(J_n, k)$ and $[\rho_{max}]$ is the unique maximal element, the assignment \[ [\rho_{min}] = [\rho_1] \leq [\rho_2] \leq \cdots  [\rho_i] \cdots \leq [\rho_{max}] \mapsto K_1 < \cdots < K_m \t,\] maps the set of maximal chains of $B_B(J_n, k)$ into total orderings of $C_B(J_n, 3)$, and this map is clearly injective.  We need only show that its image is precisely $A_B(J_n, k + 1)$.  This reduces to checking a few cases, which is treated in the Appendix [Sec. \ref{t2p}].\end{proof}

\section{Connection with Type B Weyl Groups}

As before, let $J_n$ be the set $\set{-n \ldots n} \setminus \set{0}$. Recall that the Weyl group $B_n$ acts faithfully on $J_n$ as the set of permutations $\pi : J_n \rightarrow J_n$ such that $\pi(-i) = -\pi(i)$ for all $i \in J_n$. In this way, we have a natural inclusion $B_n \hookrightarrow S_{2n}$.  Likewise, the set $A_B(J_n, 1) = B_B(J_n, 1)$ of all admissible orderings of $J_n$ includes into the set $A(J_n, 1) = B(J_n, 1)$ of \emph{all} total orderings of $J$.

\begin{defn} For a total ordering $\rho$ of $J_n$, say $$j_{-n} < j_{-n + 1} < \cdots < j_{n - 1} < j_n,$$ let $\pi_\rho \in S_{2n}$ be the permutation of $J_n$ given by $j_i \mapsto i$.  Let $\phi : B(J_n, 1) \rightarrow S_{2n}$ denote the map $\rho \mapsto \pi_\rho$.
\end{defn}

\begin{prop} \label{prp:ims}
The image of the composition $B_B(J_n,1) \hookrightarrow B(J,1) \xrightarrow{\phi} S_{2n}$ is $B_n$
\end{prop}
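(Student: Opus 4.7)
The plan is to identify admissibility of an ordering $\rho \in A_B(J_n, 1)$ with the condition that the negation map $\sigma$ acts as an order-reversing involution on $(J_n, <_\rho)$, and then to identify this same condition with the condition $\pi_\rho \in B_n$. Since the two identifications will match up, the map $\phi$ restricted to $A_B(J_n, 1)$ will have image exactly $B_n$. A small additional observation shows $B_B(J_n, 1) = A_B(J_n, 1)$, completing the proof.

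First I would unpack admissibility at $k = 1$. The $1$-packets coming from $C_B^2(J_n, 2)$ are two-element sets $\{c, -c\}$, so admissibility there is automatic for any total order. The real content comes from $C_B^1(J_n, 2)$: for each pair of distinct magnitudes $|a| < |b|$, the two classes $[-|b|, -|a|]$ and $[-|b|, |a|]$ both have packet $\{\pm|a|, \pm|b|\}$ but with complementary Hasse diagrams. Admissibility of $\rho$ for the first packet forces $-|b| <_\rho -|a| \Leftrightarrow |a| <_\rho |b|$, and admissibility for the second forces $-|b| <_\rho |a| \Leftrightarrow -|a| <_\rho |b|$. Together these conditions cover all the non-tautological pairs of $\{\pm|a|, \pm|b|\}$ and are exactly the statement that $\sigma$ reverses $\rho$ on $\{\pm|a|, \pm|b|\}$. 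Quantifying over all pairs of magnitudes, $\rho$ is admissible iff $\sigma : J_n \to J_n$ is order-reversing under $\rho$.

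Next I would translate $\pi_\rho \in B_n$. Writing $\rho$ as $j_{-n} <_\rho \cdots <_\rho j_n$ so that $\pi_\rho(j_i) = i$, the defining condition $\pi_\rho(-x) = -\pi_\rho(x)$ for all $x \in J_n$ is equivalent to $j_{-i} = -j_i$ for all $i$, which is exactly the statement that $\sigma$ is order-reversing under $\rho$. Thus $\phi$ maps $A_B(J_n, 1)$ into $B_n$, and surjectivity follows because for any $w \in B_n$ the ordering $\rho_w$ defined by $j_i := w^{-1}(i)$ is $\sigma$-antiequivariant, hence admissible, and satisfies $\pi_{\rho_w} = w$. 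To finish, I would observe that no two distinct elements of $J_n$ commute: the pair $\{c, -c\}$ is comparable in the type-$2$ packet $\{c, -c\}$, and any pair with distinct magnitudes is comparable in one of the two type-$1$ packets on their common underlying four-element set, as explicit from the Hasse diagrams above. Hence $B_B(J_n, 1) = A_B(J_n, 1)$, and the image of the composition is $B_n$. The main obstacle is the bookkeeping in the first step: one must combine both type-$1$ admissibility constraints with a common underlying set in order to recover the full $\sigma$-antisymmetry, since a single packet alone restricts only two of the six pairs in $\{\pm|a|, \pm|b|\}$.
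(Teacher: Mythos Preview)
Your proof is correct. The overall strategy matches the paper's: both reduce the claim to the equivalence ``$\rho$ admissible $\iff$ negation reverses $\rho$ $\iff$ $\pi_\rho \in B_n$.'' Where you differ is in the implication ``admissible $\Rightarrow$ negation-reversed.'' The paper argues by induction on $n$: if $x$ is the $\rho$-maximal element, then for every $y \neq x$ the packet $P_B([x,y])$ forces $-x <_\rho -y$, so $-x$ is $\rho$-minimal, and one strips off $\pm x$. You instead argue directly, pairing the two type-1 packets sharing the underlying set $\{\pm|a|,\pm|b|\}$ and combining their constraints to obtain full $\sigma$-antisymmetry on each such quadruple. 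Your route is slightly more explicit about the packet structure and also verifies that no two elements of $J_n$ commute (so $B_B(J_n,1)=A_B(J_n,1)$), a point the paper leaves implicit in writing down the inclusion $B_B(J_n,1)\hookrightarrow B(J_n,1)$.
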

\begin{proof} From the definitions, it is clear that the set of total orderings of $J_n$ sent into $B_n$ under $\phi$ are those reversed under negation.  Certainly any such total ordering belongs to $B_B(J_n, 1)$, and we need only show the converse.  So, let $\rho \in A_B(J_n,1)$ be given, and let $x$ be the maximal element of $J_n$ with respect to $\rho$. Then, for every other element $y \in J_n \setminus \set x$, considering the packet order on $P_B([x, y])$, we have $y < x$ and hence $-x < -y$. So $-x$ is the minimal element, and the claim follows by induction on $n$.
\end{proof}

Recall the reflection representation of $B_n$ in $\bb R^n$ given by \[e_i \mapsto \mathrm{sign}\,(\pi(i)) \cdot e_{|\pi(i)|}\] for $\pi \in B_n$, where $e_1, ..., e_n$ is the standard basis of $\bb R^n$.  We have the associated root system $$\Phi := \{\pm e_i : 1 \leq i \leq n\} \cup \{\pm e_i \pm e_j : 1 \leq i < j \leq n\}\subset \bb R^n.$$ We choose the set of positive roots $$\Phi^+ := \{e_i : 1 \leq i \leq n\} \cup \{e_i \pm e_j : i > j\}$$ with associated set of simple roots $$\Pi := \{e_1\} \cup \{e_i - e_{i - 1} : 1 < i \leq n\}.$$  Under the realization of $B_n$ with the subgroup of the permutations of $J_n$ discussed above, the simple reflection $s_{e_1}$ is given by the permutation $(-1, 1)$, and the simple reflection $s_{e_i - e_{i - 1}}$ for $1 < i \leq n$ is given by $(-i, -i + 1)(i, i - 1)$.  We define the function $$\Inv : B_n \rightarrow 2^{\Phi^+}$$ by $$\Inv(w) = \{\alpha \in \Phi^+ : w(\alpha) \notin \Phi^+\}$$ so that the length function $l : B_n \rightarrow \mathbb{Z}$ defined by the simple reflections $s_\alpha$ for $\alpha \in \Pi$ is given by $l(w) = |\Inv(w)|$.

We now give a specific bijection between $\Phi^+$ and $C_B(J_n, 2)$ such that the definitions of $\Inv$ on $\Phi^+$ and on $C_B(J_n, 2)$ become compatible with the identification of $B_B(J_n, 1)$ and $B_n$:

\defn Let $K \mapsto \alpha_K$ denote the bijection $C_B(J_n,2) \to \Phi^+$ given by \[ \alpha_K = \begin{cases} e_i - e_j & \t{if $K = [i,\,j]$ for $i > j > 0,$} \\ e_i + e_j & \t{if $K = [i,\,-j]$ for $i > j > 0,$} \\ e_k & \t{if $K = [k,\,\star]$ for $k > 0.$} \end{cases}\] \enddefn

\begin{lemma} \label{prp:roots}
Let $\rho \in B_B(J_n, 1)$, and let $\pi_{\rho} = \phi(\rho) \in B_n$ be the corresponding element of $B_n$.  Then for each $K \in C_B(J_n, 2)$, we have \[K \in \Inv(\rho) \iff \pi_\rho(\a_K) \not\in \Phi^+ \t.\]

\noindent In other words, the following diagram commutes.

\begin{center}
\begin{tikzpicture}
  \matrix (m) [matrix of math nodes,row sep=3em,column sep=4em,minimum width=2em]
  {
     B_B(J_n, 1) & B_n \\
     2^{C_B(J_n, 2)} & 2^{\Phi^+} \\};
  \path[-stealth]
    (m-1-1) edge node [left] {$\Inv$} (m-2-1)
            edge [double] node [below] {$\sim$} node [above] {$\phi$} (m-1-2)
    (m-2-1.east|-m-2-2) edge node [below] {$\sim$}
            node [above] {$\phi$} (m-2-2)
    (m-1-2) edge node [right] {$\Inv$} (m-2-2);
\end{tikzpicture}
\end{center}
\end{lemma}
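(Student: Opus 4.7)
The plan is to verify the equivalence $K \in \Inv(\rho) \iff \pi_\rho(\alpha_K) \notin \Phi^+$ by case analysis on the three types of elements of $C_B(J_n, 2)$, namely $K = [i,j]$ with $i > j > 0$, $K = [i,-j]$ with $i > j > 0$, and $K = [k, \star]$ with $k > 0$. The key enabling observation is Proposition \ref{prp:ims}: admissibility of $\rho$ means that $\rho$ is reversed by negation, so writing $\rho$ as $j_{-n} < \cdots < j_n$ we have $j_{-i} = -j_i$, and $\pi_\rho \in B_n$ acts on $\mathbb{R}^n$ as a signed permutation that can be written uniformly as $\pi_\rho(e_k) = e_{\pi_\rho(k)}$ under the convention $e_{-k} := -e_k$. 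This makes computing $\pi_\rho(\alpha_K)$ a routine substitution, while reading the standard packet orderings off the Hasse diagrams (and reversing, since $K \in \Inv(\rho)$ iff $\rho$ extends the reverse packet order) translates $K \in \Inv(\rho)$ into a pair of comparisons in $\rho$ that collapse to a single comparison thanks to admissibility.

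First I would handle $K = [i,j]$ with $i > j > 0$. Its preferred representative is $[-i,-j]$, and the four-node Hasse diagram for the $1$-packet $P_B(K) = \{\pm i, \pm j\}$ gives the two chains $-i < -j$ and $j < i$; reversing and invoking admissibility reduces $K \in \Inv(\rho)$ to the single comparison $i <_\rho j$. Expanding $\pi_\rho(\alpha_K) = e_{\pi_\rho(i)} - e_{\pi_\rho(j)}$ and running over the four sign patterns of $(\pi_\rho(i), \pi_\rho(j))$ (noting $|\pi_\rho(i)| \neq |\pi_\rho(j)|$ since $\pi_\rho \in B_n$) shows that $\pi_\rho(\alpha_K) \in \Phi^+$ iff $\pi_\rho(i) > \pi_\rho(j)$, i.e., iff $j <_\rho i$. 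The second case $K = [i,-j]$, with $\alpha_K = e_i + e_j$, is analogous: its preferred representative is $[-i, j]$, the reversed packet order gives $K \in \Inv(\rho) \iff i <_\rho -j$, and the sign-pattern analysis of $e_{\pi_\rho(i)} + e_{\pi_\rho(j)}$ shows membership in $\Phi^+$ is equivalent to $\pi_\rho(i) + \pi_\rho(j) > 0$, which rearranges (via $\pi_\rho \in B_n$) to $-j <_\rho i$. Finally, for $K = [k, \star]$ the two-element packet $\{k,-k\}$ has standard order $-k < k$, so $K \in \Inv(\rho) \iff k <_\rho -k$, while $\pi_\rho(e_k) = e_{\pi_\rho(k)}$ lies in $\Phi^+$ iff $\pi_\rho(k) > 0$, equivalent by admissibility to $-k <_\rho k$.

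The argument is entirely computational and uses nothing beyond the definitions, Proposition \ref{prp:ims}, and the action of $B_n$ as signed permutations on $\mathbb{R}^n$. The only real (and minor) obstacle is bookkeeping: correctly extracting the packet orderings from the Hasse diagrams in light of the preferred-representative convention (which forces the greatest-magnitude element to carry a negative sign), and keeping the sign patterns of $\pi_\rho(i)$ and $\pi_\rho(j)$ straight when determining whether a sum or difference of signed basis vectors lies in $\Phi^+$.
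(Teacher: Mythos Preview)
Your proposal is correct and follows essentially the same route as the paper: a case analysis on the three types of $K\in C_B(J_n,2)$, translating $K\in\Inv(\rho)$ into a comparison in $\rho$ and then checking positivity of $\pi_\rho(\alpha_K)$ via the sign patterns of $\pi_\rho(i)$ and $\pi_\rho(j)$. Your organization is slightly more streamlined---you derive the single equivalence $\pi_\rho(\alpha_K)\in\Phi^+ \iff \pi_\rho(i)>\pi_\rho(j)$ (respectively $\pi_\rho(i)+\pi_\rho(j)>0$) directly, whereas the paper splits into the two implications $K\notin\Inv(\rho)\Rightarrow\pi_\rho(\alpha_K)\in\Phi^+$ and $K\in\Inv(\rho)\Rightarrow\pi_\rho(\alpha_K)\notin\Phi^+$ and enumerates three sign cases in each---but the underlying computation is identical.
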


\begin{proof}  Let $\rho$ and $\pi_{\rho}$ be as in the statement of the lemma, and let $i,\,j \in \{1, ..., n\}$ with $i > j > 0$. Let $k = \pi_{\rho}(i)$ and $l = \pi_\rho(j)$. Suppose that $[-j,\,-i] \not\in \Inv(\rho)$, so that $k > l$. Then the image of the positive root $\a_P = e_i - e_j$ under $\pi_\rho$ is positive. In particular, either: \begin{enumerate} 
\item $k > l > 0$, and $\pi_\rho(\a_P) = e_k-e_l$.
\item $k > 0 > l$, and $\pi_\rho(\a_P) = e_k + e_{-l}$, or 
\item  $0 > k >  l$, and $\pi_\rho(\a_P) = e_{-l} - e_{-k}$.
\end{enumerate} 
Conversely, if $[-i,\,-j] \in \Inv(\rho)$, so that $l > k$, we have one of the following:
\begin{enumerate} 
\item $l > k > 0$, and $\pi_\rho(\a_P) = e_k-e_l$.
\item $l > 0 > k$, and $\pi_\rho(\a_P) = -e_{-k}-e_{l}$, or 
\item  $0 > l >  k$, and $\pi_\rho(\a_P) = e_{-l} - e_{-k}$.
\end{enumerate} 
Therefore $\pi_\rho(\a_P)$ is not positive.

Next, we show that if $[-j,\,i] \not\in \Inv(\rho)$, then the image of $\a_P = e_i + e_j$ under $\pi_\rho$ is positive. If this were the case, then we would have one of the following:
\begin{enumerate}
\item $-k<l<0$, and $\pi_\rho(\a_P) = e_k - e_{-l}$,
\item $-k<0<l$, and $\pi_\rho(\a_P) = e_k + e_l$, or
\item $0 < -k < l$, and $\pi_\rho(\a_P) = e_l - e_{-k}$.
\end{enumerate}
Analagous to the previous case, if $[-j,\,i] \in \Inv(\rho)$ then the image of $\a_P$ is not positive. The cases to consider here are:
\begin{enumerate}
\item $l<-k<0$, and $\pi_\rho(\a_P) = e_k - e{-l}$,
\item $l<0<-k$, and $\pi_\rho(\a_P) = -e_k - e_l$, or
\item $0 < l < -k$, and $\pi_\rho(\a_P) = e_l - e_{-k}$.
\end{enumerate}

Finally, it is clear that $\pi_\rho(e_i) \notin \Phi^+$ if and only if $[i, \star] \in \Inv(\rho)$, as needed.\end{proof}

Recall that the weak left Bruhat order on $B_n$, with respect to the choice of positive roots $\Phi^+$, is the poset structure on $B_n$ with covering relations $w < w'$ for $w' = sw$ for some simple reflection $s$ with $l(w') = l(w) + 1$.

\begin{theorem}
$\phi$ defines a poset isomorphism $B_B(J_n,1) \to B_n$, where $B_n$ is ordered by the weak left Bruhat order. 
\end{theorem}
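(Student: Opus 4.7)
The plan is to promote the set-theoretic bijection $\phi : B_B(J_n, 1) \to B_n$ of Proposition \ref{prp:ims} to a poset isomorphism by verifying that the covering relations on the two sides correspond. A covering relation in $B_B(J_n, 1)$ is a packet flip $[\rho] \lessdot p_K([\rho])$ for some $K \in N([\rho]) \setminus \Inv([\rho])$, which adds $K$ to the inversion set. A covering relation in the weak left Bruhat order on $B_n$ is $w \lessdot sw$ for a simple reflection $s$ with $l(sw) = l(w) + 1$; in this case one has $\Inv(sw) = \Inv(w) \sqcup \{w^{-1}(\alpha_s)\}$. By Lemma \ref{prp:roots}, $\phi$ already identifies the inversion maps on the two sides via the bijection $C_B(J_n, 2) \to \Phi^+$, so it suffices to show that for each applicable $K$, the packet flip $p_K$ corresponds, via $\phi$, to left multiplication by the unique simple reflection $s$ satisfying $\pi_\rho^{-1}(\alpha_s) = \alpha_K$.

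This reduces to a case analysis on the type of the 2-packet $K$, with the essential observation, already established during the proof of Proposition \ref{prp:ims}, that for $\rho \in A_B(J_n, 1)$ an element $x \in J_n$ occupies position $p$ in $\rho$ if and only if $-x$ occupies position $-p$, where positions range over $\{-n, \ldots, -1, 1, \ldots, n\}$. In the case $K = [k, \star] \in C_B^2(J_n, 2)$ with $k > 0$, the packet $\{-k, k\}$ can only have $-k$ immediately preceding $k$ in $\rho$ if these two elements occupy positions $-1$ and $1$ respectively; the flip exchanges them, which is left multiplication by the simple reflection $s_{e_1} = (-1,1)$, and a direct check confirms $\pi_\rho^{-1}(e_1) = e_k = \alpha_K$. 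In the case $K \in C_B^1(J_n, 2)$ with preferred representative $[-m, i]$ where $m > |i| > 0$, the two comparable components $\{-m < i\}$ and $\{-i < m\}$ of the packet must occupy positions $(p, p+1)$ and $(-p-1, -p)$ for some $p$ with $|p| \geq 1$; the simultaneous swap within both components is then left multiplication by the simple reflection $s_{e_{|p|+1} - e_{|p|}} = (-|p|-1, -|p|)(|p|+1, |p|)$, and the identity $\pi_\rho^{-1}(e_{|p|+1} - e_{|p|}) = \alpha_K$ follows by translating positions back into elements through $\pi_\rho$ and splitting according to the sign of $i$.

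Because both posets are finite and generated by the covering relations described above, once each packet flip is matched with a simple reflection left-multiplication in the manner just outlined, $\phi$ is automatically a poset isomorphism. The only nontrivial step is the bookkeeping in the $C_B^1$ case, but this is routine once the negation symmetry has been used to pin down the positional configurations, so no substantial obstacle is anticipated.
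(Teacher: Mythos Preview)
Your proposal is correct and follows essentially the same approach as the paper: show that packet flips on $B_B(J_n,1)$ correspond under $\phi$ to left multiplications by simple reflections, and then invoke Lemma~\ref{prp:roots} to conclude that the covering relations on the two sides coincide. The paper states this correspondence in a single sentence without the case analysis, whereas you spell out the two cases $K\in C_B^2(J_n,2)$ and $K\in C_B^1(J_n,2)$ explicitly; apart from a harmless indexing slip in the formula $s_{e_{|p|+1}-e_{|p|}}$ when $p<0$ (the point is only that some simple reflection arises), your argument is the paper's argument with the details filled in.
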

\begin{proof}  Notice that under the bijection $\phi$, the action of packet flips on $B_B(J_n, 1)$ corresponds to left multiplications by simple reflections.  The preceding lemma then shows that the covering relations in the two posets are identified under $\phi$, and the theorem follows.\end{proof}

\corollary{$\phi$ induces a bijection $A_B(J_n,2) \to R(w_0)$, where $w_0$ is the longest element of $B_n$ and $R(w_0)$ is the set of reduced expressions for $w_0$.  Under this bijection, two admissible orderings $\rho, \rho' \in A_B(J_n, 2)$ are elementarily equivalent if and only if the corresponding reduced expressions for $w_0$ are related by exchanging the order of a pair of adjacent commuting simple reflections.  Packet flip operations $p_K$ on $A_B(J_n, 2)$ are identified with $m = 3$ ($sts = tst$) braid relations for $K \in C_B^1(J_n, 3)$ and are identified with $m = 4$ ($stst = tsts$) braid relations for $K \in C_B^2(J_n, 3)$.}

\section{Appendix}

\subsection{Proof of Lemma \ref{l4}} \label{l4p}
As the lemma is merely casework, it was checked by a computer algorithm. We will describe this algorithm, and prove its correctness.

\defn{For some $\rho \in A_B(J_n,2)$, and two elements $a,\,b \in C_B(J_n,2)$, $a$ \emph{crosses $b$ in $\rho$} if there exists $\rho ' \in [\rho]$ such that the relative positions of $a$ and $b$ in the orders $\rho$ and $\rho'$ are opposite.

We first describe an algorithm which, on inputs $\rho \in A_B(J_n,2)$ and $a,\,b \in C_B(J_n,2)$, outputs 1 if $a$ crosses $b$ in $\rho$ and $0$ otherwise.

\alg If $b < a$ in $\rho$, replace $\rho$ by its reverse ordering.  Let $S$ denote the chain of elements in $\rho$ greater than $a$ and less than $b$. Initialize a list, called {\tt right}, containing only the element $a$. For each element $q$ in $S$, in ascending order, we compute whether $q$ commutes with every element in {\tt right}. If so, we continue. If not, we add $q$ to {\tt right}. Finally, return 1 if $b$ commutes with every $q$ contained in {\tt right}, and 0 otherwise.\endalg

\begin{proof}[Proof of correctness]  Suppose Algorithm 1 outputs 1.  Then each of the elements of $S \setminus\{a\}$  which is not added to ${\tt right}$ can be moved to the left past $a$, leaving only elements in ${\tt right}$ between $a$ and $b$.  But $b$ commutes with all elements in ${\tt right}$, so $b$ can be moved to the left past $a$, so $a$ crosses $b$ in $\rho$ as needed.

Conversely, suppose Algorithm 1 returns $0$.  Then there exists an element $q_1$ of ${\tt right}$ which does not commute with $b$.  Either $q_1$ does not commute with $a$, or there exists $q_2$ in right with $q_2 < q_1$ in the order $\rho$ such that $q_1$ and $q_2$ do not commute.  Continuing in this manner, there is a sequence $q_1, ..., q_s$ for some $s \geq 1$ of elements of ${\tt right}$ such that $a < q_s < \cdots q_1 < b$ in the order $\rho$ and each pair $(a, q_s)$, $(q_s, q_{s - 1})$, ..., $(q_2, q_1)$, $(q_1, b)$ does not commute.  It follows that the relative positions of these elements cannot change by transposing adjacent commuting elements, so in particular $a < b$ for all orders $\rho' \in [\rho]$, so $a$ does not cross $b$ in $\rho$.
\end{proof}

In the following algorithm, posets are represented as directed acyclic graphs, where vertices represent elements of the poset and there is a directed edge for every covering relation. For elements $a, b$, $a < b$ exactly when there is a directed path from $a$ to $b$. The \emph{transitive union} of two poset structures on the same set is given by the directed graph with the same vertex set and with edge set equal to the union of the edge sets for each poset.  The resulting relation is reflexive and transitive, and it is antisymmetric so long as it contains no cycles. Linear extensions are computed using the topological sorting algorithm.

\nota Recall that a $2$-packet $P$ is understood to be an ordered set with the ordering inherited from the standard ordering $\rho_{min}$. $\rev P$ is understood to be the same set, with the ordering relation inherited from $\rho_{max}$.

Recall that each case from Lemma \ref{l3} involves a sequence $Q$ of 4 or 5 elements of $C_B(J_n,2)$, all but one belonging to some packet $P_B(K)$ for $K \in C_B(J_n,\,3)$, and the remaining element shares exactly one index with $K$. As such, there is a unique element $R \in C_B(J_n, 4)$ such that the set $T = \cup_{S \in P_B(R)} P_B(S)$ contains the sequence $Q$. Furthermore, the unique 2-packet containing any pair of elements in the sequence $Q$ is contained in $T$.

\alg Initialize an empty list {\tt L}.  Each pair of elements ${\tt pair}$ appearing in the sequence $Q$ is contained in at most one common 2-packet $P$.  For each such ${\tt pair}$ appearing in $Q$ and lying in the 2-packet $P$:
\begin{itemize}
\item If {\tt pair} appears in $Q$ in standard order, add the poset given by the standard order on $P$ to {\tt L}.
\item Otherwise, add the poset $\rev P$ given by the reverse-standard order to {\tt L}.
\end{itemize}

Let $U$ be the set of packets $P_B(S)$ for $S \in P_B(R)$ whose order is not recorded in this manner. For each $B \subset U$, create a new list {\tt L'} containing the elements of {\tt L}. For each element $P \in U \cap B$, add the poset $P$ to {\tt L'}. For each element $P \in U \setminus B$, add the poset $\rev P$ to {\tt L'}. Compute the transitive  union over the relations in {\tt L'}. If there are no cycles, record a linear extension of the corresponding poset.

For each recorded linear extension, iterate over the packets $P_B(S)$ for $S \in P_B(R)$ until finding a 2-packet $P^*$ which is in standard order such that, with respect to the linear extension under consideration, either 
\begin{itemize}
\item $\min_\rho P^* > \min_\rho P_B(K)$ and $\min_\rho P_B(K)$ does not cross $\min_\rho P^* $, or
\item $\min_\rho P^* = \min_\rho P_B(K)$, $\max_\rho P^* < \max_\rho P_B(K)$, and $\max_\rho P^*$ does not cross $\max_\rho P_B(K)$.
\end{itemize} If this is the case, the algorithm continues. Otherwise, it outputs 0. If every linear extension recorded has been checked in this way, the algorithm outputs 1.

Algorithm 2 returns 1 when run on each of the cases in Lemma \ref{l3}, proving Lemma \ref{l4}.

\subsection{Proof of Theorem \ref{t2}} \label{t2p}
We first show that the image of the map \[ [\rho_{min}] = [\rho_1] \leq [\rho_2] \leq \cdots  [\rho_i] \cdots \leq [\rho_{max}] \mapsto K_1 < \cdots < K_m\t,\] in question lies in $A_B(J_n, k + 1)$.  For this, we need to check that for every element $K \in C_B(J_n, k + 2)$, its packet $P_B(K) \subset C_B(J_n, k + 1)$ appears in either standard or reverse standard order in $K_1 < \cdots < K_m$.  For this, we look at the restriction of the standard order $\rho_{min}$ to the set $S = \cup_{Z \in P_B(K)} P(Z) \subset C_B(J_n, k)$ and consider the possible orders in which the packets of elements of $P_B(K)$ could be flipped.  By inspection, we have the following tables indicating the possible orders in which the packets of elements of $P_B(K)$ can be flipped, which show in each case that the possible orders are exactly the standard or reverse standard order on $P_B(K)$.  A preferred representative is assumed in the left-hand column only.  For the case $K = [i,j, k]$ we consider only the subset $\{i, j, k\}$ of $S$, which is enough already to deduce the possible orderings of $P_B(K)$.

\begin{description}[leftmargin=*]
\item[Case $\bf k = 1$] \hfill \\
\smallskip

\noindent\begin{tabularx}{\textwidth}{|r|r|X|}

  \hline
  $K$ & Restriction of ${\rho_{min}}$ to ${S}$ & Possible flip sequence (up to reverse) \\[1ex] \hline \hline
  $[i,\,j,\,\star]$ & $-i < -j < j < i$ & $[i,\,j] \prec [i,\,\star] \prec [i,\,-j] \prec [j,\,\star]$ \\ \cline{1-3}
  $[i,\,j,\,k]$ with $j < 0$ & $i < j < k$ & $[i,\,j] \prec [i,\,k] \prec [j,\,k]$ \\ \cline{1-3}
  $[i,\,j,\,k]$ with $j > 0$ & $i < k < j$ & $[j,\,k] \prec [i,\,j] \prec [i,\,k]$ \\ \hline \end{tabularx}
  
\medskip

\item [Case $\bf k = 2$] \hfill \\

\vspace{-2ex}
\begin{figure}[H]
\makebox[\textwidth][c]{
\noindent{\small \begin{tabularx}{1.2\textwidth}{|p{2cm}|p{5cm}|X|}
  \hline
  $K$ & Restriction of ${\rho_{min}}$ to ${S}$ & Possible flip sequence (up to reverse) \\[1ex] \hline \hline
  $[i,\,j,\,k,\,\star]$ & $[-i,\,-j] < [-i,\,-k] < [-j,\,-k] < [i,\,\star] < [-i,\,j] < [-i,\,k] < [j,\,\star] < [-j,\,k] < [k,\,\star]$ & $[-i,\,-j,\,-k] \prec [i,\,j,\,\star] \prec [-i,\,-j,\,k] \prec [-i,\,-k,\,j] \prec[i,\,k,\,\star] \prec [j,\,k,\,\star] \prec [-i,\,j,\,k]$ \\ \cline{1-3}
  $[i,\,j,\,k,\,l]$, where $k < 0$ & $[i,\,j]<[i,\,k]<[j,\,k]<[i,\,l]<[j,\,l] < [k,\,l]$ & $[i\,j,\,k] \prec [i,\,j,\,l] \prec [i,\,k,\,l] \prec [j,\,k,\,l]$ \\ \cline{1-3}
  $[i,\,j,\,l,\,k]$, where $k > 0$ & $[i,\,j] < [-k,\,-l] < [i,\,l] < [i,\,k] < [j,\,l] < [j,\,k]$ & $[i,\,j,\,k] \prec [i,\,j,\,l] \prec [j,\,k,\,l] \prec [i,\,k,\,l]$\\
  \hline \end{tabularx}}}
  \end{figure}
\end{description}

So, we see that $K_1\cdots K_m$ is indeed an admissible ordering of $C_B(J_n, k + 1)$.

Next we show surjectivity.  Suppose $K_N\ldots K_1$ is an admissible order of $C_B(J_n, k + 1)$.  Let $r_0 = [\rho_{min}]$ denote the class of the standard ordering of $C_B(J_n, k)$.  We want to show that $K_N\ldots K_1$ gives a valid sequence of packet flips $p_{K_N}\ldots p_{K_1}$ on $r_0$.  With the empty sequence of packet flips as base case, assume inductively that $p_{K_i}\ldots p_{K_1}$ is a valid sequence of packet flips on $r_0$ for some $i \geq 0$.  Then writing $r_i = p_{K_i}\ldots p_{K_1}(r_0)$, we need to check that $K_{i + 1} \in N(r_i)$.  Noting that $K_{i + 1}$ is the minimal element of $C_B(J_n, k + 1)\setminus \Inv(r_i)$ with respect to the admissible order $K_N\ldots K_1$, it suffices to prove the following statement: If $\rho$ is an admissible ordering of $C_B(J_n, k)$ and $K \in C_B(J_n, k + 1) \setminus (\Inv(\rho) \cup N([\rho]))$, then $K$ is not minimal in the restriction of any admissible ordering to  $C_B(J_n, k + 1)\setminus \Inv(\rho)$. This is what we check by casework below.

\bigskip
\begin{description}[leftmargin=*]
\item[Case $\bf k = 1$] Let $\prec$ denote the ordering of $J_n = C_B(J_n, 1)$ given by $\rho \in A_B(J_n, 1)$.  Let $K = [k, l] \in C_B(J_n, 2)\setminus(\Inv(\rho) \cup N([\rho]))$ be as above, where if $K \in C_B^1(J_n, 2)$ then $[k, l]$ is a preferred representative, and if $K \in C_B^2(J_n, 2)$ then $l = -k > 0$, by the convention used previously.  As $[k, l] \notin \Inv(\rho) \cup N([\rho])$, there must exist $x \in J_n$ such that $k \prec x \prec l$.  The following table considers the possible relative orderings of $k, l,$ and $x$ under the usual ordering of $\bb Z$, denoted $<$.  The second column treats these relative positions of $x$, the third column lists implications about the order in which certain packets flips can be applied to $[\rho]$, and the final column lists the admissible order (up to reverse) of these packets, showing that each case leads to a contradication, as needed.

\setlength{\voffset}{-1in}

\smallskip

\begin{tabularx}{\textwidth}{|r|c|c|X|}
  \hline 
  Case & Condition & Implied order & Admissible order (up to reverse) \\
  \hline \hline
  $k \prec x \prec l$ & $x < k < l$ & $[x,\,k]\prec[k,\,l] \prec [x,\,l]$                 & $[x,\,k] < [x,\,l] < [k,\,l]$ \\ \cline{2-4}                   
                      & $k < x < l$ & $[k,\,l] \prec [k,\,x]$ and $[k,\,l] \prec [x,\,l]$ & $[k,\,x] < [k,\,l] < [x,\,l]$ \\ \cline{2-4}
                      & $k < l < x$ & $[l,\,x] \prec [k,\,l] \prec [k,\,x]$               & $[k,\,l] < [k,\,x] < [x,\,l]$ \\ \hline
\end{tabularx}
\vspace{3ex}
\bigskip
\item[Case $\bf k = 2$] We now make similar considerations for $k=2$. Here, when $K \in C_B^1(J_n, 3)$ we write $K = [k, l, m]$, and when $K \in C_B^2(J_n, 3)$ we write $K = [i, j, \star]$.  As in the case $k = 1$, each case leads to a contradiction, as needed, except here there are more cases to consider.  The cases are treated in the following table, and they form an exhaustive list of cases by Lemmas \ref{l2} and \ref{l3}.
\end{description}
\begin{landscape}
\small
\thispagestyle{empty}
\newgeometry{margin=1cm}
\setlength{\voffset}{3.132cm}

\begin{center}
\begin{figure}[htb]
\makebox[\textwidth][c]{

\begin{tabularx}{1.2868 \textwidth}{|c|p{2.5cm}|p{2cm}|l|c|} 
  \cline{1-5} 
  Case & condition & condition & Implied order of packet flips& Admissible order (up to reverse) \\ 
  \hline \hline $[k,\,l] \prec [k,\,x] \prec [k,\,m] \prec [l,\,m]$   & $k < l < m < 0$& $x > m$       & $[k,\,m,\,x] \prec [k,\,l,\,m] \prec [k,\,l,\,x]$ & $[k,\,l,\,m]                                                                                        < [k,\,l,\,x] < [k,\,m,\,x]$ \\ \cline{3-5}
                                                                        && $l < x < m$  & $[l,\,x,\,m] \prec [k,\,l,\,m] \prec[k,\,x,\,m]$ & $[k,\,l,\,m] < [k,\,x,\,m] < [l,\,x,\,m]$\\ \cline{3-5}
                                                                        && $k < x < l$  & $[k,\,l,\,m] \prec [k,\,x,\,m]$ and $[k,\,l,\,m] \prec [x,\,l,\,m]$ & $[k,\,x,\,m] < [k,\,l,\,m] < [x,\,l,\,m]$ \\ \cline{3-5}
                                                                        && $x < k$      & $[x,\,k,\,m] \prec [k,\,l,\,m] \prec [x,\,l,\,m]$ & $[x,\,k,\,m] < [x,\,l,\,m] < [k,\,l,\,m]$ \\ \cline{2-5}
                                                        & $k<l<0<m$     & $0<x<m$       & $[k,\,m,\,x] \prec [k,\,l,\,m] \prec [k,\,l,\,x]$ & $[k,\,l,\,m]                                     < [k,\,l,\,x] < [k,\,m,\,x]$ \\ \cline{3-5}
                                                                        && $x > m$      & $[k,\,l,\,m] \prec [k,\,l,\,x]$ and $[k,\,l,\,m] \prec [k,\,x,\,m]$ &  $[k,\,l,\,x] < [k,\,l,\,m] < [k,\,x,\,m]$ \\ \cline{3-5}
                                                                        && $l<x<0$      & same as $\uparrow$ & same as $\uparrow$ \\ \cline{3-5}
                                                                        && $k <x<l$     & $[k,\,x,\,l] \prec [k,\,l,\,m] \prec [k,\,x,\,m]$ & $[k,\,x,\,] < [k,\,x,\,m] < [k,\,l,\,m]$ \\ \cline{3-5}
                                                                        && $x < k$      & $[x,\,k,\,l] \prec [k,\,l,\,m] \prec [x,\,k,\,m]$ & $[x,\,k,\,l] < [x,\,k,\,m] < [k,\,l,\,m]$. \\ \cline{1-5}
  $[k,\,l] \prec [k,\,m] \prec [x,\,m] \prec [l,\,m]$   & $k<l<m<0$     & $x > m$       &$[l,\,m,\,x] \prec [k,\,l,\,m] \prec [k,\,m,\,x]$ & $[k,\,l,\,m] <                                                                                       [k,\,m,\,x] < [l,\,m,\,x]$ \\ \cline{3-5}
                                                                        && $l < x < m$  &$[l,\,x,\,m] \prec [k,\,l,\,m] \prec [k,\,x,\,m]$ & $[k,\,l,\,m] < [k,\,x,\,m] < [l,\,x,\,m]$ \\ \cline{3-5}
                                                                        && $k < x < l$  &$[k,\,l,\,m] \prec [k,\,x,\,m]$ and $[k,\,l,\,m] \prec [x,\,l,\,m]$ & $[k,\,x,\,m] < [k,\,l,\,m] < [x,\,l,\,m]$ \\ \cline{3-5}
                                                                        && $x < k$      &$[x,\,k,\,m] \prec [k,\,l,\,m] \prec [x,\,l,\,m]$ & $[x,\,k,\,m] < [x,\,l,\,m] < [k,\,l,\,m]$ \\ \cline{2-5}
                                                        & $k<l<0<m$     & $x > 0$       &$[l,\,m,\,x] \prec [k,\,l,\,m] \prec [k,\,m,\,x]$ & $[k,\,l,\,m] <                                   [k,\,m,\,x] < [l,\,m,\,x]$ \\ \cline{3-5}
                                                                        && $l < x < 0$  &$[l,\,x,\,m] \prec [k,\,l,\,m] \prec [k,\,x,\,m]$ & $[k,\,l,\,m] < [k,\,x,\,m] < [l,\,x,\,m]$ \\ \cline{3-5}
                                                                        && $k < x < l$  &$[k,\,l,\,m] \prec [k,\,x,\,m]$ and $[k,\,l,\,m] \prec [j,\,x,\,m]$ & $[k,\,x,\,m] < [k,\,l,\,m] < [j,\,x,\,m]$ \\ \cline{3-5}
                                                                        && $x < k$      &$[x,\,k,\,m] \prec [k,\,l,\,m] \prec [x,\,l,\,m]$ & $[x,\,k,\,m] < [x,\,l,\,m] < [k,\,l,\,m]$ \\ \cline{1-5}
  $[k,\,l] \prec [l,\,x] \prec [l,\,m]$                 & $k<m<0$       &$x > m$        & $[l,\,m,\,x] \prec [k,\,l,\,m] \prec [k,\,l,\,x]$ & $[k,\                                                                                         ,l,\,m] < [k,\,l,\,x] < [l,\,m,\,x]$ \\ \cline{3-5}
                                                                        &&$k < x < m$   & $[k,\,l,\,m] \prec [k,\,l,\,x]$ and $[k,\,l,\,m] \prec [l,\,x,\,m]$ & $[k,\,l,\,x] < [k,\,l,\,m] < [l,\,x,\,m]$ \\ \cline{3-5}
                                                                        &&$x < k$       & $[x,\,k,\,m] \prec [k,\,l,\,m] \prec [x,\,l,\,m]$ & $[x,\,k,\,m] < [x,\,l,\,m] < [k,\,l,\,m]$. \\ \cline{2-5}
                                                        & $k<0<m $      &$x > m$        & $[k,\,l,\,m] \prec [k,\,l,\,x]$ and $[k,\,l,\,m] \prec [l,\,m,\,x]$ & $[k,\,l,\,x]<[k,\,l,\,m]<[l,\,m,\,x]$ \\ \cline{3-5}
                                                                        && $k < x < 0$  & same as $\uparrow$ & same as $\uparrow$ \\ \cline{3-5}
                                                                        && $0 < x < m$  & $[l,\,m,\,x] \prec [k,\,l,\,m] \prec [k,\,l,\,x]$ & $[k,\,l,\,m] < [k,\,l,\,x] < [l,\,m,\,x]$\\ \cline{3-5}
                                                                        && $x < k$      & $[x,\,k,\,l] \prec [k,\,l,\,m] \prec [x,\,l,\,m]$ & $[x,\,k,\,l]<[x,\,l,\,m]<[k,\,l,\,m]$ \\ \cline{1-5}
 $[i,\,j] \prec [i,\,x] \prec [i,\,\star] \prec [i,\,-j] \prec [j,\,\star]$&$i<j<0$& $x < i$& $[x,\,i,\,j] \prec [-i,\,-j,\,\star] \prec [-x,\,-i,\,\star]$& $[x,\,i,\,j] < [-x,\,-i,\,\star]<[-i,\,-j,\,\star]$ \\ \cline{3-5}
                                                                          &&$i<x<j$& $[i,\,x,\,j]\prec[-i,\,-j,\,\star] \prec [-i,\,-x,\,\star]$&$[i,\,x,\,j]<[-i,\,-x,\,\star]<[-i,\,-j,\,\star]$ \\ \cline{3-5}
                                                                          &&$j<x<0$&$[-i,\,-j,\,\star] \prec [i,\,j,\,x]$ and $[-i,\,-j,\,\star] \prec [i,\,x,\,-j]$&$[i,\,j,\,x]<[-i,\,-j,\,\star]<[i,\,x,\,-j]$\\ \cline{3-5}
                                                                          &&$0<x<-i$&$[-i,\,x,\,\star] \prec [-i,\,-j,\,\star] \prec [i,\,j,\,x]$&$[-i,\,-j,\,\star] < [i,\,j,\,x] < [-i,\,x,\,\star]$ \\ \cline{3-5}
                                                                          &&$x>-i$&$[x,\,-i,\,\star] \prec [-i,\,-j,\,\star] \prec [x,\,-i,\,j]$&$[x,\,-i,\,\star] < [x,\,-i,\,j] < [-i,\,-j,\,\star]$ \\ \cline{1-5}
$[i,\,j] \prec [i,\,\star] \prec [i,\,x] \prec [i,\,-j] \prec [j,\,\star]$&$i<j<0$& $x < j$&$[-x,\,-i,\,\star] \prec [-i,\,-j,\,\star] \prec [x,\,i,\,-j]$&$[-x,\,-i,\,\star]<[x,\,i,\,-j]<[-i,\,-j,\,\star]$  \\ \cline{3-5}
                                                                          &&$j<x<0$& $[-i,\,-j,\,\star] \prec [i,\,j,\,x]$ and $[-i,\,-j,\,\star] \prec [i,\,x,\,-j]$&$[i,\,j,\,x] < [-i,\,-j,\,\star] < [i,\,x,\,-j]$ \\ \cline{3-5}
                                                                          &&$0<x<-j$& $[i,\,-j,\,x] \prec [-i,\,-j,\,\star] \prec [i,\,j,\,x]$&$[-i,\,-j,\,\star] < [i,\,j,\,x] < [i,\,-j,\,x]$ \\ \cline{3-5}
                                                                          &&$-j<x<-i$& $[-i,\,-j,\,\star] \prec [i,\,x,\,-j]$ and $[-i,\,-j,\,\star] \prec [-i,\,x,\,\star]$ & $[-i,\,x,\,\star]<[-i,\,-j,\,\star]<[i,\,x,\,-j]$  \\ \cline{3-5}
                                                                          &&$x>-i$&$[-i,\,-j,\,\star] \prec [-x,\,i,\,-j]$ and $[-i,\,-j,\,\star] \prec [-x,\,-i,\,-j]$&$[-x,\,i,\,-j]<[-i,\,-j,\,\star] < [-x,\,-i,\,-j]$ \\ \cline{1-5}
$[i,\,j] \prec [i,\,\star] \prec [j,\,x] \prec [i,\,-j] \prec [j,\,\star]$&$i<j<0$& $x < i$&$[x,\,i,\,j] \prec [-i,\,-j,\,\star] \prec [x,\,j,\,-i]$&$[x,\,i,\,j]<[x,\,j,\,-i]<[-i,\,-j,\,\star]$  \\ \cline{3-5}
                                                                          &&$i<x<j$& $[-i,\,-j,\,\star] \prec [i,\,j,\,x]$ and $[-i,\,-j,\,\star] \prec [-x,\,-j,\,\star]$& $[i,\,j,\,x]<[-i,\,-j,\,\star]<[-x,\,-j,\,\star]$  \\ \cline{3-5}
                                                                          &&$j<x<0$&$[-i,\,-j,\,\star] \prec [i,\,j,\,x]$ and $[-i,\,-j,\,\star] \prec [i,\,-j,\,-x]$&$[i,\,j,\,x]<[-i,\,-j,\,\star]<[i,\,-j,\,-x]$  \\ \cline{3-5}
                                                                          &&$0<x<-j$&$[-j,\,x,\,\star]\prec[-i,\,-j,\,\star] \prec [i,\,j,\,x]$ & $[-i,\,-j,\,\star]<[i,\,j,\,x]<[-j,\,x,\,\star]$ \\ \cline{3-5}
                                                                          &&$-j<x<-i$&$[i,\,x,\,-j] \prec [-i,\,-j,\,\star] \prec [i,\,j,\,x]$&$[i,\,x,\,-j]<[i,\,j,\,x]<[-i,\,-j,\,\star]$\\ \cline{3-5}
                                                                          &&$x>-i$&$[-i,\,-j,\,\star] \prec [-x,\,-i,\,-j]$ and $[-i,\,-j,\,\star] \prec [-x,\,i,\,-j]$&$[-x,\,i,\,-j]<[-i,\,-j,\,\star]<[-x,\,-i,\,-j]$ \\ \cline{1-5}
$[i,\,j] \prec [i,\,-j] \prec [j,\,x] \prec [j,\,\star]$&$i<j<0$& $x < i$& $[x,\,j,\,i] \prec [-i,\,-j,\,\star] \prec [x,\,j,\,\star]$&$[x,\,j,\,i] < [x,\,j,\,\star] < [-i,\,-j,\,\star]$  \\ \cline{3-5}
                                                                          &&$i<x<j$&$[-i,\,-j,\,\star] \prec [i,\,x,\,j]$ and $[-i,\,-j,\,\star] \prec [x,\,j,\,-i]$&$[i,\,x,\,j]<[-i,\,-j,\,\star] < [x,\,j,\,-i]$  \\ \cline{3-5}
                                                                          &&$j<x<0$&$[-i,-j,\,\star] \prec [i,\,j,\,x]$ and $[-i,-j,\,\star] \prec [i,\,-j,\,-x]$&$ [i,\,j,\,x]< [-i,-j,\,\star] < [i,\,-j,\,-x]$  \\ \cline{3-5}
                                                                          &&$0<x<-j$&$[i,\,-x,\,-j] \prec [-i,\,-j,\,\star] \prec [-j,\,x,\,\star]$&$[-i,\,-j,\,\star]<[i,\,-x,\,-j] < [-j,\,x,\,\star]$ \\ \cline{3-5}
                                                                          &&$-j<x<-i$& $[i,\,-x,\,-j]\prec[-i,\,-j,\,\star]\prec[i,\,j,\,x]$& $[i,\,-x,\,-j]<[i,\,j,\,x]<[-i,\,-j,\,\star]$ \\ \cline{3-5}  
                                                                          &&$x > -i$& $[-x,\,i,\,-j] \prec [-i,\,-j,\,\star] \prec [x,\,-j,\,\star]$&$[-x,\,i,\,-j]<[x,\,-j,\,\star]<[-i,\,-j,\,\star]$ \\ \cline{1-5}                                                                                                                                                                                                                                                       
\end{tabularx}}

\end{figure}
\end{center}
\end{landscape}
\normalsize

\newpage

\section{Acknowledgements}

This paper represents the results of an undergraduate research project conducted by Suhas Vijaykumar in MIT's Summer Program in Undergraduate Research, with graduate student mentor Seth Shelley-Abrahamson.  We thank Ben Elias for suggesting the idea of generalizing the Manin-Schechtman higher Bruhat orders to type B and for his guidance and excitement throughout the completion of this work, and we thank Daniel Thompson and Gabriella Studt for their initial work on this project.

\end{document}